\theoremstyle{plain}
\newtheorem{theorem}{Theorem}[section]
\newtheorem{lemma}[theorem]{Lemma}
\newtheorem{cor}[theorem]{Corollary}
\newtheorem{utheorem}{\textrm{\textbf{Theorem}}}
\newcommand{\bd}[1]{d^{\langle #1 \rangle}}
\newcommand{\z}[1]{z^{\langle #1 \rangle}}
\theoremstyle{definition}
\newtheorem{defn}[theorem]{Definition}
\newtheorem{rem}[theorem]{Remark}
\numberwithin{equation}{section}
\DeclareMathOperator{\adj}{adj}
\newcommand{\altr}[1]{\mathbb{R}^{#1}_{\rm alt}}
\begin{document}
\title[Sign non-reversal property for TN/TP matrices, and TP tests for
interval hulls]{Sign non-reversal property for totally non-negative and
totally positive matrices, and testing total positivity of their interval
hull}

\author{Projesh Nath Choudhury}
\address[P.N.~Choudhury]{Department of Mathematics, Indian Institute of
Science, Bangalore 560012, India}
\email{\tt projeshc@iisc.ac.in, projeshnc@alumni.iitm.ac.in}

\author{M. Rajesh Kannan}
\address[M.R.~Kannan]{Department of Mathematics, Indian Institute of
Technology Kharagpur, Kharagpur 721302, India}
\email{\tt rajeshkannan@maths.iitkgp.ac.in, rajeshkannan1.m@gmail.com}

\author{Apoorva Khare}
\address[A.~Khare]{Department of Mathematics, Indian Institute of
Science, Bangalore 560012, India; and Analysis \& Probability Research
Group, Bangalore 560012, India}
\email{\tt khare@iisc.ac.in}

\date{\today}

\begin{abstract}
A matrix $A$ is totally positive (or non-negative) of order $k$, denoted
$TP_k$ (or $TN_k$), if all minors of size $\leq k$ are positive (or
non-negative). It is well-known that such matrices are characterized by
the variation diminishing property together with the sign non-reversal
property. We do away with the former, and show that $A$ is $TP_k$ if and
only if every submatrix formed from at most $k$ consecutive
rows and columns has the sign non-reversal property. In fact this can be
strengthened to only consider test vectors in $\mathbb{R}^k$ with
alternating signs. We also show a similar characterization for all $TN_k$
matrices -- more strongly, both of these characterizations use a single
vector (with alternating signs) for each square submatrix. These
characterizations are novel, and similar in spirit to the fundamental
results characterizing $TP$ matrices by Gantmacher--Krein
[\textit{Compos.\ Math.} 1937] and $P$-matrices by Gale--Nikaido
[\textit{Math.\ Ann.} 1965].
    
As an application, we study the interval hull $\mathbb{I}(A,B)$ of two $m
\times n$ matrices $A=(a_{ij})$ and $B = (b_{ij})$. This is the
collection of $C \in \mathbb{R}^{m \times n}$ such that each $c_{ij}$ is
between $a_{ij}$ and $b_{ij}$. Using the sign non-reversal property, we
identify a two-element subset of $\mathbb{I}(A,B)$ that detects the
$TP_k$ property for all of $\mathbb{I}(A,B)$ for arbitrary $k \geq 1$. In
particular, this provides a test for total positivity (of any order),
simultaneously for an entire class of rectangular matrices.
In parallel, we also provide a finite set to test the total
non-negativity (of any order) of an interval hull $\mathbb{I}(A,B)$.
\end{abstract}

\subjclass[2010]{15B48 (primary), 15A24, 65G30 (secondary)}

\keywords{Sign non-reversal property, totally positive matrix, totally
non-negative matrix, interval hull of matrices}

\maketitle

\section{Introduction and main results}

Given an integer $k\geq 1$, a matrix is \textit{totally positive of order
$k$ ($TP_k$)} if all its minors of order at most $k$ are positive, and
\textit{totally positive (TP)} if all its minors are positive. Similarly,
one defines \textit{totally non-negative (TN)} and $TN_k$ matrices for $k
\geq 1$. These classes of matrices have important applications in diverse
areas in mathematics, including analysis, approximation theory, cluster
algebras, combinatorics, differential equations, Gabor analysis,
integrable systems, matrix theory, probability and statistics, and
representation theory
\cite{BFZ96,Bre95,fallat-john,FZ02,GRS18,K68,KW14,Lu94,pinkus,Ri03,Sch07}.

A property intimately linked with total positivity is variation
diminution, which may be regarded as originating in the famous 1883
memoir of Laguerre~\cite{Laguerre}. Laguerre, following up on Descartes'
rule of signs~\cite{Descartes}, presented numerous results on the sign
changes in the coefficients of power series -- which he termed
`variations'. One such result says that \textit{if $f(x)$ is a polynomial
and $s \geq 0$, then the number $var(e^{sx} f(x))$ of variations in the
Maclaurin coefficients of $e^{sx} f(x)$ do not increase with $s$, hence
are bounded above by $var(f) < \infty$.} In 1912, in correspondence with
P\'olya~\cite{FP12}, Fekete reformulated and proved this result using
(what are known today as) one-sided P\'olya frequency sequences and their
variation diminishing property. This led P\'olya to coin the phrase
`variation diminishing' (or `variationsvermindernd' in German) in the
matrix-theoretic setting. We recall some of the fundamental contributions
in this setting: in 1930, Schoenberg~\cite{S30} showed that $TN$ matrices
(in fact sign-regular matrices) satisfy the variation diminishing
property. The complete characterization of this property was achieved in
1936 by Motzkin in his thesis~\cite{Mot36}. Subsequently, in their 1937
paper~\cite{gantmacher-krein}, Gantmacher--Krein showed that $TP_k$ and
$TN_k$ matrices are characterized by the positivity (or non-negativity)
of the spectra of all submatrices of size $\leq k$ (see
Theorem~\ref{Tgk}). In their 1950 book~\cite{GK50}, Gantmacher--Krein
made further fundamental contributions to total positivity and variation
diminution. In particular, they characterized $TN$ as follows (cited from
a later source):

\begin{theorem}[{\cite[Theorem 3.4]{pinkus}}]\label{Tgk-vardim}
Given a real $m \times n$ matrix $A$, the following statements are
equivalent.
\begin{enumerate}
	\item $A$ is totally non-negative.
	\item For all $x \in \mathbb{R}^n$, $S^-(Ax) \leq S^-(x)$. If
	moreover equality occurs and $Ax \neq 0$, the first (last)
	nonzero component of $Ax$ has the same sign as the first (last)
	nonzero component of $x$. Here $S^{-}(x)$ denotes the number of
	changes in sign after deleting all zero entries in $x$.
\end{enumerate}
\end{theorem}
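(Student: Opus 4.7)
The plan is to prove the two implications separately.

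For (1) $\Rightarrow$ (2), I would reduce to the TP case via the classical density result that every TN matrix is a pointwise limit of TP matrices $A_\epsilon$ (e.g., $A_\epsilon = A + \epsilon P$ for $P$ a fixed TP matrix of the right size). Since $S^{-}$ is lower semi-continuous---a component that vanishes in the limit can only remove sign alternations, not create them---the inequality $S^{-}(A_\epsilon x) \leq S^{-}(x)$ for all $\epsilon > 0$ passes to $S^{-}(Ax) \leq S^{-}(x)$. For a TP matrix, I would establish VD by contradiction: assuming $S^{-}(Ax) \geq s+1$ with $s = S^{-}(x)$, pick rows $i_0 < \cdots < i_{s+1}$ on which $Ax$ strictly alternates, let $B$ denote the $(s+2) \times n$ submatrix on those rows, and pair $Bx$ with an alternating sign vector $\epsilon \in \{\pm 1\}^{s+2}$ so that $\langle \epsilon, Bx\rangle > 0$. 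Expanding via the Cauchy--Binet identity over column subsets $K \subset [n]$ of size $s+2$, $\langle \epsilon, Bx \rangle$ becomes a sum of $\det B[:,K]$ (strictly positive) times alternating sums of the entries of $x_K$; the hypothesis $S^{-}(x)\leq s$ forces these alternating sums to all be non-positive, contradicting positivity. For the sign-matching condition at equality, I would refine the same Cauchy--Binet analysis by tracking the first (respectively last) nonzero position of $Ax$: in the TP case the leading sign of $Ax$ is controlled by a strictly positive top-row minor of $A$ acting on the leading nonzero block of $x$, and this controlling minor remains bounded away from zero under the perturbation $A_\epsilon \to A$.

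For (2) $\Rightarrow$ (1), I would induct on the minor order $k$. The base case $k=1$ follows from plugging $x = e_j$ into (2): $S^{-}(Ae_j) \leq 0$ shows the $j$-th column has no sign change, and the sign-matching condition forces its sign to be non-negative, so $A \geq 0$ entrywise. For the inductive step, fix index sets $I, J$ of size $k$ and build a test vector $x$ supported on $J$ whose entries are the alternating-sign $(k-1) \times (k-1)$ cofactors of $A[I|J]$ along a fixed row in $I$. By Laplace expansion, $(Ax)_i$ vanishes for $k-1$ of the rows $i \in I$ and equals $\pm \det A[I|J]$ on the remaining one, while $S^{-}(x) \leq k-1$ by construction. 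A negative value of the $k$-minor would then either inflate $S^{-}(Ax)$ beyond $S^{-}(x)$ through rows adjacent to $I$, or else violate the first/last-nonzero sign-matching condition; either way one contradicts (2). To keep the test vector nondegenerate, I would select $I, J$ so that some $(k-1)$-cofactor along the chosen row is strictly positive (available by the inductive hypothesis applied to a minimal counterexample).

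The main obstacle is the sign-matching clause in both directions. On the forward side, it does not pass cleanly to the limit $\epsilon \to 0^{+}$ because entries of $A_\epsilon x$ that vanish in the limit may a priori switch sign along the way; handling this requires pinpointing a specific non-vanishing minor of $A$ that controls the first (and last) nonzero position of $Ax$ and verifying that it remains nonzero for the TP approximants. On the converse side, the cofactor-based test vector can degenerate when too many $(k-1)$-minors of $A[I|J]$ vanish, and one must show that passing to a minimal counterexample always affords a nondegenerate choice that genuinely witnesses the negativity of the offending $k$-minor.
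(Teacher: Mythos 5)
This theorem is quoted background in the paper (it is cited from Gantmacher--Krein via Pinkus, Theorem 3.4) and is not proved there, so your proposal has to stand on its own -- and as written it has genuine gaps in both directions. In the forward direction, the pivotal step is not a valid identity: $\langle \epsilon, Bx\rangle=\sum_{i,l}\epsilon_i B_{il}x_l$ is of degree one in the entries of the $(s+2)\times n$ submatrix $B$, whereas each column minor $\det B[\,\cdot\,|K]$ of order $s+2$ has degree $s+2$, so $\langle\epsilon,Bx\rangle$ cannot be expanded by Cauchy--Binet (or anything else) into a positive combination of such minors times alternating sums of the $x_K$. The classical argument is structurally different: one groups the columns of $A$ into blocks according to the sign pattern of $x$ (exactly the construction of the matrix $Y$ in the paper's proof of Theorem~A), observes that the resulting $m\times(s+1)$ matrix is totally positive, and then rules out $s+1$ sign changes in its range by a linear-dependence/bordered-determinant argument on $s+2$ chosen rows. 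Two further points: $A_\epsilon=A+\epsilon P$ is in general neither $TP$ nor even $TN$ (try $A=\left(\begin{smallmatrix}0&1\\0&0\end{smallmatrix}\right)$), so you must invoke Whitney's density theorem itself; and the equality-case sign-matching clause -- half of statement (2) -- is only flagged as ``the main obstacle,'' not proved, and it genuinely does not follow from lower semicontinuity of $S^-$.

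In the converse direction the cofactor test vector, even with your safeguard of choosing $I,J$ with a positive cofactor via a minimal counterexample, does not suffice when complementary minors vanish. Take $A=\left(\begin{smallmatrix}0&1\\1&0\end{smallmatrix}\right)$: here $\det A=-1$, all entries (the order-$1$ minors) are non-negative, and there is no smaller counterexample to retreat to; yet every cofactor vector you can form (along either row, or dually along either column) is, up to placement, $(0,-1)^T$ or $(-1,0)^T$, and in each case $Ax$ satisfies $S^-(Ax)=S^-(x)=0$ with matching first and last nonzero signs -- so no violation of (2) is exhibited, and your claimed dichotomy (``inflate $S^-$ or violate sign-matching'') fails for the unperturbed test vector. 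Condition (2) does fail for this $A$, but only for vectors such as $x=(1,-1)^T$; in other words, one must perturb the vanishing cofactors to small nonzero values of the prescribed alternating sign (here $x=(\epsilon,-1)^T$ gives $Ax=(-1,\epsilon)^T$, equality of $S^-$ together with a sign reversal at the first component) and then prove that such a perturbation always produces a violation. That $\epsilon$-argument, or an equivalent device, is the missing core of the converse; without it the induction does not close.
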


The first and second sentences in assertion~(2) are known as the
`variation diminishing property' and the `sign non-reversal property',
respectively. Thus, these properties together characterize totally
non-negative matrices. A similar result holds for $TP$ matrices; see
e.g.~\cite[Theorem 3.3]{pinkus}.

In this short note, our goal is to show that the $TP_k$ and $TN_k$
properties are each equivalent to sign non-reversal alone. This provides
characterizations -- parallel to the above fundamental 20th-century
results -- that are equally simple, and remarkably, seem to our knowledge
(and that of experts) to be novel.

To state these results, we isolate the following definitions, used
below without further reference.

\begin{defn}
Let $n \geq 1$ be an integer, and $S \subseteq \mathbb{R}^n$ a subset.
\begin{enumerate}
\item Define the set $\langle n \rangle := \{ 1, \dots, n \}$ and the
vector $\bd{n} := (1, -1, \dots, (-1)^{n-1})^T \in \mathbb{R}^n$.

\item A matrix $A \in \mathbb{R}^{n \times n}$ has the \textit{sign
non-reversal property} with respect to $S$, if for all vectors $0 \neq x
\in S$, there is some coordinate $i \in \langle n \rangle$ such that $x_i
(Ax)_i > 0$.

\item We will also need a non-strict version. A matrix $A \in
\mathbb{R}^{n \times n}$ has the \textit{non-strict sign non-reversal
property} with respect to $S$ if for all vectors $0 \neq x \in S$, there
is some coordinate $i \in \langle n \rangle$ such that $x_i \neq 0$ and
$x_i (Ax)_i \geq 0$.

\item Let $\altr{n} \subset \mathbb{R}^n$ comprise the vectors with all
nonzero components and alternating signs.

\item Given $z = (z_1, \dots, z_n)^T \in \{ \pm 1 \}^n$, define $D_z$ to
be the diagonal matrix with $(i,i)$th entry $z_i$.

\item Finally, given two matrices $A,B \in \mathbb{R}^{m \times n}$, and
tuples of signs $z \in \{ \pm 1 \}^m, z' \in \{ \pm 1 \}^n$, define the
$m \times n$ matrices $|A|$, $I_{z,z'}(A,B)$, and $C^\pm(A,B)$ via:
\[
|A|_{ij} := |a_{ij}|, \qquad I_{z,z'}(A,B) :=
\frac{A+B}{2} - D_z \frac{|A-B|}{2} D_{z'}, \qquad C^\pm(A,B) :=
I_{\bd{m},\; \pm \bd{n}}(A,B).
\]
\end{enumerate}
\end{defn}

Now our first main result characterizes total positivity in terms of
increasingly weaker statements involving sign non-reversal.
Here and below, we use the notion of a \textit{contiguous submatrix},
i.e.~one whose rows and columns are indexed by sets of consecutive
integers.

\begin{utheorem}\label{tp-sign-rev_k}
	Let $m,n \geq k \geq 1$ be integers.
	Given $A \in \mathbb{R}^{m \times n}$, the following statements
	are equivalent.
	\begin{enumerate}
	\item The matrix $A$ is totally positive of order $k$.
	\item Every square submatrix of $A$ of size $r \leq k$ has the
	sign non-reversal property with respect to $\mathbb{R}^r$.
	\item Every contiguous square submatrix of $A$ of size $ r \leq
	k$ has the sign non-reversal property with respect to
	$\altr{r}$.\smallskip

	In fact, this is equivalent to non-strict sign non-reversal at a
	single vector:\smallskip

	\item For every $r \in \langle k \rangle$ and contiguous $r
	\times r$ submatrix $B$ of $A$, define the vector
	\begin{equation}\label{Ezb}
	z^B := \det(B) \adj(B) \bd{r}
	\end{equation}
	where $\adj(B)$ is the adjugate matrix of $B$. Now:
	(i)~$Bx \neq 0$ for all $x \in \altr{r}$; and
	(ii)~$B$ has the non-strict sign non-reversal property
	with respect to $z^B$.
	\end{enumerate}
\end{utheorem}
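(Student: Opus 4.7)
My plan is to close the cycle $(1) \Rightarrow (2) \Rightarrow (3) \Rightarrow (4) \Rightarrow (1)$. The first two arrows are routine: if $A$ is $TP_k$, then any $r \times r$ submatrix with $r \leq k$ is itself $TP$ (all its minors are minors of $A$ of size $\leq k$), hence a $P$-matrix, and the Gale--Nikaido characterization of $P$-matrices yields sign non-reversal on $\mathbb{R}^r$, proving $(1) \Rightarrow (2)$; the passage to contiguous submatrices and to $\altr{r} \subseteq \mathbb{R}^r$ then gives $(2) \Rightarrow (3)$.

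The remaining two implications share a common inductive engine. The key tool is Fekete's theorem, which says $A$ is $TP_r$ iff every contiguous minor of size $\leq r$ is positive. I would induct on $r$, showing in each step that every contiguous $r \times r$ minor of $A$ is positive. Once all contiguous minors of size $< r$ are known positive, Fekete gives $A$ totally positive of order $r-1$, so every contiguous $r \times r$ submatrix $B$ of $A$ is itself $TP_{r-1}$. This forces the adjugate to have the checkerboard sign pattern $\adj(B)_{ij} = (-1)^{i+j} M_{ji}$ with $M_{ji} > 0$, so $\adj(B)\bd{r}$ lies in $\altr{r}$ with coordinate $i$ having sign $(-1)^{i-1}$. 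Combined with the algebraic identity $B z^B = (\det B)^2 \bd{r}$ from $B \cdot \adj(B) = (\det B) I$, this locates $z^B$: it lies in $\altr{r}$ with sign $(-1)^{i-1}$ when $\det B > 0$, with sign $(-1)^i$ when $\det B < 0$, and equals zero when $\det B = 0$.

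For $(4) \Rightarrow (1)$, I rule out the cases $\det B \leq 0$ using (i) and (ii) respectively. If $\det B = 0$, then $B$ has rank exactly $r-1$ (since it is $TP_{r-1}$), so $\adj(B)$ is rank one and each of its nonzero columns is a vector in $\altr{r} \cap \ker B$, contradicting (i). If $\det B < 0$, then at every coordinate the product $z^B_i (B z^B)_i$ has sign $(-1)^i \cdot (-1)^{i-1} = -1$, contradicting (ii). Hence $\det B > 0$, the induction closes, and Fekete delivers $TP_k$. For $(3) \Rightarrow (4)$ I run the same induction to derive $(3) \Rightarrow (1)$ first: a kernel vector in $\altr{r}$ directly violates strict sign non-reversal on $\altr{r}$, and a $z^B \in \altr{r}$ with reversed alternating sign also does, since every coordinate gives a negative product. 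Once $(1)$ is in hand, $(4)$ is immediate, as each $B$ is $TP_r$, placing $z^B \in \altr{r}$ with sign $(-1)^{i-1}$ in coordinate $i$, which gives (ii) strictly while invertibility gives (i).

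The main obstacle is the inductive step. The sign analysis of $z^B$ and of $\ker B$ only becomes accessible after $B$ has been established as $TP_{r-1}$, which is why the Fekete-type reduction to contiguous minors is essential for the induction to close; within the step itself, both hypotheses of (4) are needed---(i) to exclude the degenerate case $\det B = 0$ via a kernel vector in $\altr{r}$, and (ii) to exclude $\det B < 0$ via the sign mismatch in $z^B_i(Bz^B)_i$.
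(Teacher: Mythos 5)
Your proof is correct, and its skeleton --- Gale--Nikaido for $(1)\Rightarrow(2)$, a Fekete-driven induction on the size of contiguous minors, and the identity $Bz^B=(\det B)^2\,\bd{r}$ together with the checkerboard signs of $\adj(B)$ to force $\det B>0$ --- is the same as the paper's. Where you genuinely diverge is in excluding the singular case: the paper rules out $\det B=0$ by taking an arbitrary kernel vector $x$, noting it has no zero entries and is not alternating, grouping its coordinates into blocks of like sign, and assembling from the columns of $B$ an $r\times(u+1)$ totally positive matrix $Y$ with $Y\bd{u+1}=Bx=0$, contradicting its full column rank. You instead observe that since $B$ is $TP_{r-1}$, every column of $\adj(B)$ has entries $(-1)^{i+j}M_{ji}$ with $M_{ji}>0$, hence is a nonzero vector of $\altr{r}$ lying in $\ker B$ when $\det B=0$, contradicting (4)(i) (or (3)) outright; this is shorter and avoids the sign-block construction entirely. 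You also reorganize the logic: rather than the paper's cycle $(2)\Rightarrow(3)\Rightarrow(4)\Rightarrow(1)$, you prove $(3)\Rightarrow(1)$ and $(4)\Rightarrow(1)$ by the same induction and then $(1)\Rightarrow(4)$ directly; this is sound, and it sidesteps the slightly delicate point that $(3)\Rightarrow(4)$(ii) is not literally immediate before one knows that $z^B$ is alternating or zero. The only cosmetic suggestion is to state the base case $r=1$ explicitly (as the paper does), though your general argument covers it with the usual convention for the $1\times 1$ adjugate.
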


Notice that~(4) is \textit{a priori} weaker than~(3).

\begin{rem}
A `coordinate-based' unpacking of the above characterization says:
\textit{A matrix $A \in \mathbb{R}^{m \times n}$ is $TP_k$ if and only if
the following holds:}

\textit{Suppose $y=\begin{pmatrix} 0_l\\x\\0_s \end{pmatrix}$, where $l
\geq 0, s \geq 0$, and $0<r \leq \min \{ m, n, k \}$ are integers such
that $l + r + s = n$, and $x \in \altr{r}$. Then for each $j \in \langle
m-r+1 \rangle$, there exists $i \in \langle r \rangle$ such that
$y_{l+i}(Ay)_{i+j-1} >0$.}
\end{rem}

To our knowledge (and that of experts), Theorem \ref{tp-sign-rev_k} is a
novel characterization of total positivity of a given order $k$ -- as
well as of $TN_k$, stated and proved below. We now provide an
application. The third assertion in the theorem helps to provide a test
for not just one matrix but an entire interval hull of matrices (also
termed `interval matrix') to be $TP_k$, by reducing it to two test
matrices. Given matrices $A, B \in \mathbb{R}^{m \times n}$, recall that
their interval hull, denoted by $\mathbb{I}(A,B)$, is defined as follows:
\begin{equation}
\mathbb{I}(A,B) = \{C \in \mathbb{R}^{m \times n}: c_{ij} = t_{ij} a_{ij}
+ (1 - t_{ij}) b_{ij}, t_{ij} \in [0,1]\} \label{hulleqn}.
\end{equation}

If $A\neq B$, their interval hull is an uncountable set. We say that
$\mathbb{I}(A,B)$ is $TP_k$ ($TN_k$) if every element in it is $TP_k$
($TN_k$). A natural question involves finding a minimal test set which
would determine if $\mathbb{I}(A,B)$ is $TP_k$. 
(See~\cite{GAT16} for a recent survey of interval matrix results,
including along these lines.) When
(a)~the interval consists of square matrices ($m=n$), and
(b)~the order of total positivity equals the dimension ($k=n$),
this was answered by Garloff~\cite{Gar82} in 1982. It is natural to ask
what happens when these two equality-constraints are not imposed. Again,
we could not find such a result in the literature. Our next result, an
application of Theorem~\ref{tp-sign-rev_k}, answers this question.

\begin{utheorem}\label{tp_hull_k}
	Let $m,n \geq k \geq 1$ be integers, and $A, B \in \mathbb{R}^{m
	\times n}$. Then all matrices in $\mathbb{I}(A, B)$ are $TP_k$
	if and only if the two matrices $C^\pm(A,B)$ are $TP_k$.
\end{utheorem}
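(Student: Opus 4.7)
The forward direction is essentially immediate: a direct computation from the definition of $I_{z,z'}(A,B)$ shows that each of its entries equals either $\min(a_{ij},b_{ij})$ or $\max(a_{ij},b_{ij})$ (since $z_i z'_j \in \{\pm 1\}$), so in particular $C^\pm(A,B) \in \mathbb{I}(A,B)$, and both matrices inherit the $TP_k$ property from the hypothesis on $\mathbb{I}(A,B)$.

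The substance lies in the reverse direction. My plan is to invoke the equivalence (1)$\Leftrightarrow$(3) of Theorem~\ref{tp-sign-rev_k}: it suffices to show, for an arbitrary $C \in \mathbb{I}(A,B)$, that every contiguous $r \times r$ submatrix $C'$ of $C$ with $r \leq k$ has the sign non-reversal property with respect to $\altr{r}$. Fix such a submatrix at rows $p,\dots,p+r-1$ and columns $q,\dots,q+r-1$, and fix $x \in \altr{r}$. Writing $x_j = z'_j y_j$ with $y_j := |x_j| > 0$ and $z' \in \{\pm \bd{r}\}$, one notes that $z'_i z'_j = (-1)^{i+j}$ regardless of the sign choice in $z'$, so the desired inequality $x_i(C'x)_i > 0$ reduces to
\[
\sum_{j=1}^r (-1)^{i+j}\, c'_{ij}\, y_j \;>\; 0 \qquad \text{for some } i \in \langle r \rangle.
\]

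The key observation is that $C^+(A,B)$ and $C^-(A,B)$ are the entrywise extremizers of the signed quantities $(-1)^{i+j}c_{ij}$ as $C$ ranges over $\mathbb{I}(A,B)$: unpacking the definitions, $(C^+(A,B))_{ij}$ equals $\min(a_{ij},b_{ij})$ when $i+j$ is even and $\max(a_{ij},b_{ij})$ when $i+j$ is odd, while $C^-(A,B)$ exhibits the opposite pattern. Choosing $C^\star \in \{C^+(A,B),\, C^-(A,B)\}$ according to the parity of $p+q$, the corresponding contiguous submatrix $C^{\star\prime}$ of $C^\star$ at the same position as $C'$ satisfies
\[
(-1)^{i+j}\, C^{\star\prime}_{ij} \;=\; \min\!\bigl((-1)^{i+j} a_{p+i-1,\,q+j-1},\; (-1)^{i+j} b_{p+i-1,\,q+j-1}\bigr) \qquad \forall\, i,j \in \langle r \rangle.
\]
Since $c'_{ij}$ lies in the interval spanned by the two arguments on the right, this yields the entrywise bound $(-1)^{i+j} c'_{ij} \geq (-1)^{i+j} C^{\star\prime}_{ij}$.

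To finish, I would apply Theorem~\ref{tp-sign-rev_k} to $C^\star$, which is $TP_k$ by assumption: its contiguous submatrix $C^{\star\prime}$ has the sign non-reversal property with respect to $\altr{r}$, producing an index $i$ with $x_i(C^{\star\prime}x)_i > 0$, equivalently $\sum_j (-1)^{i+j} C^{\star\prime}_{ij} y_j > 0$. Multiplying the entrywise bound above by $y_j > 0$ and summing yields $\sum_j (-1)^{i+j} c'_{ij} y_j > 0$ at the same index $i$, completing the sign non-reversal test for $C'$. I expect the main obstacle to be the bookkeeping of the parity shift induced by the submatrix position $(p,q)$; this shift is precisely what forces \emph{both} $C^+(A,B)$ and $C^-(A,B)$ to appear in the test set (one for each parity of $p+q$). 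Once that parity correspondence is correctly set up, the rest of the argument is routine.
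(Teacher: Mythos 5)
Your proof is correct and takes essentially the same route as the paper: both reduce, via Theorem~\ref{tp-sign-rev_k}(1)$\Leftrightarrow$(3), to checking sign non-reversal of contiguous submatrices at alternating test vectors, and bound $x_i(C'x)_i$ from below by the corresponding quantity for the submatrix of $C^+(A,B)$ or $C^-(A,B)$ selected by the parity of the submatrix's position. The only difference is cosmetic: you verify the needed comparison directly from the checkerboard $\min/\max$ description of $C^\pm(A,B)$, whereas the paper invokes the Rohn--Rex inequality (Lemma~\ref{rohn_exten2}), which for alternating sign vectors amounts to the same entrywise bound.
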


\begin{rem}
Note that $C^\pm(A,B)$ are independent of $k$.
\end{rem}

\begin{rem}
It is natural to ask if `totally non-negative' analogues of
Theorems~\ref{tp-sign-rev_k} and~\ref{tp_hull_k} exist. We indeed provide
these below -- see Theorems~\ref{ThmC} and~\ref{Ttnk}.
\end{rem}

Our next -- and immediate -- application of Theorem~\ref{tp-sign-rev_k}
is a novel characterization of \textit{P\'olya frequency sequences of
order $k$}; recall these are real sequences $(c_n)_{n \in \mathbb{Z}}$
such that for all integers
\[
r \in \langle k \rangle, \qquad m_1 < \cdots < m_r, \qquad
n_1 < \cdots < n_r,
\]
the determinant $\det (c_{m_i - n_j})_{i,j=1}^r \geq 0$. If all such
determinants are in fact positive, we say the sequence is a
\textit{$TP_k$ P\'olya frequency sequence}.

\begin{cor}
Let $k \geq 1$ be an integer. A real sequence $(c_n)_{n \in \mathbb{Z}}$
is a $TP_k$ P\'olya frequency sequence, if and only if for all integers
$r \in \langle k \rangle$ and $l \in \mathbb{Z}$, and all $x \in
\altr{r}$, there exists $j_0 \in \langle r \rangle$ such that $x_{j_0}
\sum_{j=1}^r c_{l + j_0 - j} x_j > 0$.
\end{cor}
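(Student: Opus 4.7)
The plan is to reduce the corollary to Theorem~\ref{tp-sign-rev_k} applied to finite contiguous windows of the bi-infinite Toeplitz matrix $T := (c_{i-j})_{i,j \in \mathbb{Z}}$ associated to $(c_n)$.

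First I would set up the following dictionary between the sequence data and the matrix data. The sequence $(c_n)$ is a $TP_k$ P\'olya frequency sequence if and only if every $r \times r$ minor of $T$ (for $r \in \langle k \rangle$) picked with strictly increasing row and column indices is positive; since each such minor sits inside a sufficiently large finite contiguous block of $T$, this is equivalent to every finite contiguous block of $T$ being $TP_k$. Moreover, the contiguous $r \times r$ subsquares of these finite blocks are precisely the Toeplitz matrices $M_l := (c_{l+i-j})_{i,j=1}^r$ as $l$ ranges over $\mathbb{Z}$ and $r$ ranges over $\langle k \rangle$, and the identity $(M_l x)_{j_0} = \sum_{j=1}^r c_{l + j_0 - j}\, x_j$ shows that the sign non-reversal property of $M_l$ with respect to $\altr{r}$ is exactly the coordinate condition stated in the corollary.

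With the dictionary in place, the equivalence is immediate from parts~(1) and~(3) of Theorem~\ref{tp-sign-rev_k}. For the forward direction, if $(c_n)$ is $TP_k$ PF then every finite contiguous block of $T$ is $TP_k$, and Theorem~\ref{tp-sign-rev_k}(1)$\Rightarrow$(3) gives sign non-reversal for every $M_l$ with respect to $\altr{r}$, which is precisely the hypothesis of the corollary. Conversely, the hypothesis supplies sign non-reversal for every $M_l$ with $r \leq k$ and $l \in \mathbb{Z}$; applying Theorem~\ref{tp-sign-rev_k}(3)$\Rightarrow$(1) to each finite contiguous block of $T$ (whose contiguous $r \times r$ subsquares are all of the form $M_l$) shows that every such block is $TP_k$, so every $r \times r$ minor of $T$ with $r \leq k$ is positive, i.e.\ $(c_n)$ is $TP_k$ PF.

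The only point requiring care is the passage from the bi-infinite Toeplitz matrix $T$ to the finite contiguous blocks to which Theorem~\ref{tp-sign-rev_k} applies; this is handled by embedding any given minor or subsquare into a block of sufficiently large size. Apart from this bookkeeping, there is nothing to prove beyond quoting Theorem~\ref{tp-sign-rev_k}.
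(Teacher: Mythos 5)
Your proposal is correct and follows essentially the same route as the paper, which likewise deduces the corollary by applying Theorem~\ref{tp-sign-rev_k} to the contiguous Toeplitz submatrices $(c_{l+i-j})_{i,j=1}^r$ of the bi-infinite Toeplitz matrix. Your write-up merely spells out the bookkeeping (embedding minors into large finite contiguous blocks) that the paper leaves implicit.
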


Indeed, this follows by applying Theorem~\ref{tp-sign-rev_k} to the
square submatrices
\[
\begin{pmatrix}
c_l & c_{l-1} & \cdots & c_{l-r+1}\\
c_{l+1} & c_l & \cdots & c_{l-r+2}\\
\vdots & \vdots & \ddots & \vdots\\
c_{l+r-1} & c_{l+r-2} & \cdots & c_l
\end{pmatrix}, \qquad l \in \mathbb{Z}.
\]

Our final results are the counterparts of Theorem~\ref{tp-sign-rev_k}
and~\ref{tp_hull_k} for $TN$ matrices, promised above. In contrast to
Theorem~\ref{tp-sign-rev_k}(4) for $TP_k$ matrices, the $TN_k$ property
turns out to be \textit{equivalent} to all (small enough) square
submatrices having the non-strict sign non-reversal property with respect
to a single, well-chosen vector -- which turns out to be either
alternating or zero:

\begin{utheorem}\label{ThmC}
Let $m,n \geq k \geq 1$ be integers. Given $A \in \mathbb{R}^{m \times
n}$, the following statements are equivalent.
\begin{enumerate}
\item The matrix $A$ is totally non-negative of order $k$.

\item Every square submatrix of $A$ of size $r \leq k$ has the non-strict
sign non-reversal property with respect to $\mathbb{R}^r$.

\item Every square submatrix of $A$ of size $r \leq k$ has the non-strict
sign non-reversal property with respect to $\altr{r}$.

\item For every $r \in \langle k \rangle$ and $r \times r$ submatrix $B$
of $A$, the matrix $B$ has the non-strict sign non-reversal property with
respect to the vector $z^B$, defined as
in~\eqref{Ezb}.
\end{enumerate}
\end{utheorem}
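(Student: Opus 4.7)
The plan is to establish the equivalences via the cycle $(1) \Rightarrow (2) \Rightarrow (3) \Rightarrow (1)$ together with $(1) \Leftrightarrow (4)$. All four implications rest on two algebraic identities, valid for any $r \times r$ submatrix $B$ of $A$:
\[
B z^B = \det(B)^2 \bd{r}, \qquad (z^B)_i = (-1)^{i-1} \det(B) \sum_{l=1}^r M_{li}(B),
\]
where $M_{li}(B)$ denotes the $(l,i)$ minor of $B$. The first follows from $\adj(B)\,B = \det(B) I$ and the definition of $z^B$; the second from expanding $(\adj(B)\bd{r})_i$ entrywise with cofactors.

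For $(1) \Rightarrow (2)$ I would approximate the $TN_k$ matrix $A$ by a sequence of $TP$ matrices $A_\varepsilon \to A$ (a classical density result), so that each $r \times r$ submatrix $B_\varepsilon$ is $TP_r$ and Theorem~\ref{tp-sign-rev_k} supplies strict sign non-reversal on $\mathbb{R}^r$; for a fixed $0 \neq x \in \mathbb{R}^r$, a pigeonhole argument on the witnessing index yields in the limit some $i$ with $x_i \neq 0$ and $x_i(Bx)_i \geq 0$. The implication $(2) \Rightarrow (3)$ is trivial. For $(1) \Rightarrow (4)$ the assertion is vacuous when $z^B = 0$; otherwise the two identities combine to give
\[
(z^B)_i(Bz^B)_i = \det(B)^3 \sum_{l=1}^r M_{li}(B),
\]
which is nonnegative when $B$ is $TN_r$, and any $i$ with $(z^B)_i \neq 0$ works.

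The reverse implications $(3) \Rightarrow (1)$ and $(4) \Rightarrow (1)$ I would prove by a common induction on $r$ (equivalently on $k$), with the trivial base case $r=1$. For the inductive step I may assume $A$ is already $TN_{r-1}$, so every minor $M_{li}(B)$ of any $r \times r$ submatrix $B$ is nonnegative. Suppose for contradiction that some such $B$ has $\det(B) < 0$. Then the displayed identity gives $(z^B)_i(Bz^B)_i = \det(B)^3 \sum_l M_{li}(B) \leq 0$, with equality only when $\sum_l M_{li}(B) = 0$, which in turn forces $(z^B)_i = 0$. For $(4) \Rightarrow (1)$ this directly contradicts hypothesis (4), completing that direction.

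The main obstacle is $(3) \Rightarrow (1)$, which requires an honest alternating-sign test vector. Under the induction hypothesis and $\det(B) < 0$, the vector $-z^B$ has the sign pattern of $\bd{r}$ but possibly with zero coordinates, so a priori $-z^B \notin \altr{r}$. The fix is to perturb: set $x(t) := -z^B + t\,\bd{r}$ for $t > 0$. This vector does lie in $\altr{r}$ because every zero coordinate of $-z^B$ is filled in with the sign of $\bd{r}$, and every nonzero coordinate of $-z^B$ already agrees in sign with $\bd{r}$. Applying (3) and passing to a subsequence $t_n \downarrow 0$ with a common witnessing index $i$, I would expand $x(t_n)_i (Bx(t_n))_i$ using $Bz^B = \det(B)^2 \bd{r}$: when $(z^B)_i \neq 0$ the constant-in-$t$ term equals $\det(B)^3 \sum_l M_{li}(B) < 0$; and when $(z^B)_i = 0$ the constant term vanishes while the coefficient of $t_n$ equals $-\det(B)^2 < 0$. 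In either case the product is strictly negative for all small $t_n > 0$, contradicting (3). This perturbation from a not-quite-alternating to an honest alternating test vector is the technical heart of the argument.
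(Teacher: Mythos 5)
Your proposal is correct in substance, but it follows a partly different route from the paper, and two small points need repair. The paper proves the single cycle $(1)\Rightarrow(2)\Rightarrow(3)\Rightarrow(4)\Rightarrow(1)$: its $(1)\Rightarrow(2)$ is exactly your Whitney-plus-pigeonhole argument, and its $(4)\Rightarrow(1)$ is your induction built on the same two identities $z^B_i=(-1)^{i-1}\det(B)\sum_l M_{li}(B)$ and $Bz^B=\det(B)^2\bd{r}$ (these are \eqref{tpalt} and the computation \eqref{Ecramer2}), run there in the direct rather than contrapositive form. What you do differently is to prove $(3)\Rightarrow(1)$ on its own, via the perturbation $x(t)=-z^B+t\,\bd{r}\in\altr{r}$, whereas the paper never perturbs: it passes from $(3)$ to the weaker-looking $(4)$ and closes the cycle there, using the induction hypothesis to show $z^B$ is itself honestly alternating. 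Your perturbation argument is sound and is, in effect, the justification one would need for controlling a $z^B$ with zero coordinates by alternating test vectors; the cost is that you prove an extra implication ($(1)\Leftrightarrow(4)$ separately plus $(3)\Rightarrow(1)$), the benefit is that each arrow in your scheme is fully self-contained.

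Two points to fix. First, in $(1)\Rightarrow(2)$ you cannot approximate a $TN_k$ matrix by $TP$ matrices: a $TN_k$ matrix may have negative minors of order greater than $k$, so $TP$ matrices are not dense in the $TN_k$ matrices. What you need -- and all your argument uses -- is the density of $TP_k$ matrices in $TN_k$ matrices (Theorem~\ref{Twhitney}), which still makes every $r\times r$ submatrix of the approximants $TP$ for $r\le k$. Second, in $(4)\Rightarrow(1)$ your contradiction requires hypothesis $(4)$ to be non-vacuous, i.e.\ $z^B\neq 0$; since you are assuming $\det(B)<0$, the matrix $\adj(B)$ is invertible and hence $z^B=\det(B)\adj(B)\bd{r}\neq 0$, but this one line should be said explicitly (the paper records the analogous fact as ``no column of $\adj(B)$ is zero''). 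With these two additions your proof is complete.
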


\begin{rem}
Theorems~\ref{tp-sign-rev_k} and~\ref{ThmC} are reminiscent of a
classical result of Ky Fan of a similar nature \cite[Theorem~5]{KyFan},
shown in the context of proving Ostrowski-type inequalities. We briefly
discuss it vis-a-vis the current results: firstly,
Theorems~\ref{tp-sign-rev_k} and~\ref{ThmC} require the use of only a
single vector $z^B$ as in~\eqref{Ezb}, in contrast to an uncountable test
set in~\cite{KyFan}. Next, Ky Fan studies $TP$ matrices; we are able to
account for both $TP$ and $TN$ matrices -- and moreover, we characterize
matrices that are $TP/TN$ of any order $k$. Finally, Ky Fan works with
all submatrices, whereas the above results deduce total positivity (of
order $k$) from working with just the contiguous submatrices (of size at
most $k$).
\end{rem}

Given Theorem~\ref{ThmC}, which is a $TN_k$ analogue of
Theorem~\ref{tp-sign-rev_k}, a natural question is to seek a similar
$TN_k$ analogue of Theorem~\ref{tp_hull_k}. Such a result was shown very
recently (2020) by Adm et al.~\cite{AG20}, for rectangular $TN$ matrices
$A,B \in \mathbb{R}^{m \times n}$. (See also the related
work~\cite{AG13}, which resolves a longstanding conjecture
from~\cite{Gar82} involving (square) nonsingular $TN$ interval matrices.)
Returning to~\cite{AG20}, the authors show that under certain technical
constraints, a minimal test set of two matrices suffices to check the
total non-negativity of the entire interval hull. Our final result
removes the technical assumptions in~\cite{AG20}, and holds for $TN_k$
interval hulls for arbitrary $k \geq 1$ -- at the cost of working with a
larger (but finite) test set:

\begin{utheorem}\label{Ttnk}
Let $m,n \geq k \geq 1$ be integers, and $A,B \in \mathbb{R}^{m \times
n}$. Then all matrices in $\mathbb{I}(A,B)$ are $TN_k$ if and only if the
matrices $\{ I_{z,z'}(A,B) : z \in \{ \pm 1 \}^m, z' \in \{ \pm 1 \}^n
\}$ are all $TN_k$.
\end{utheorem}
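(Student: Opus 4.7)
The forward direction is immediate: for every $z \in \{\pm 1\}^m$ and $z' \in \{\pm 1\}^n$, the $(i,j)$-th entry of $I_{z,z'}(A,B)$ equals either $\min(a_{ij},b_{ij})$ or $\max(a_{ij},b_{ij})$ according as $z_i z'_j$ is $+1$ or $-1$. Hence each $I_{z,z'}(A,B)$ lies in $\mathbb{I}(A,B)$ and so is $TN_k$ by hypothesis.

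For the reverse direction, I plan to invoke Theorem~\ref{ThmC}(3): it suffices to show that for every $C \in \mathbb{I}(A,B)$, every $r \times r$ submatrix $M$ of $C$ with $r \leq k$ (indexed by rows $\alpha \subseteq \langle m \rangle$ and columns $\beta \subseteq \langle n \rangle$), and every $x \in \altr{r}$, there exists some $i$ with $x_i(Mx)_i \geq 0$. The central step is to produce a single matrix $N = I_{z,z'}(A,B)$ from the test set whose restriction $N'$ to rows $\alpha$ and columns $\beta$ entrywise minimizes the functionals $x_i(\cdot\, x)_i$ simultaneously for every $i$, over all admissible submatrices in $\mathbb{I}$.

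Concretely: since $x \in \altr{r}$, one has $\mathrm{sign}(x_i x_j) = (-1)^{i+j}$, independent of the sign of $x_1$. Minimizing $\sum_j M_{ij} x_i x_j$ over the admissible range $M_{ij} \in [\min(a_{\alpha_i,\beta_j}, b_{\alpha_i,\beta_j}),\, \max(a_{\alpha_i,\beta_j}, b_{\alpha_i,\beta_j})]$ is thus achieved by taking $M_{ij} = \min$ when $i+j$ is even and $M_{ij} = \max$ when $i+j$ is odd; crucially, this prescription depends on $(i,j)$ only through their parities, so a single matrix realizes all row-wise minima at once. Choosing $z$ with $z_{\alpha_i} = (-1)^i$ (values elsewhere arbitrary) and $z'$ with $z'_{\beta_j} = (-1)^j$ yields $z_{\alpha_i} z'_{\beta_j} = (-1)^{i+j}$, so the restriction $N'$ of $N := I_{z,z'}(A,B)$ to rows $\alpha$ and columns $\beta$ is precisely the desired minimizer. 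Since $N$ is $TN_k$ by hypothesis, $N'$ is a size-$r$ submatrix of a $TN_k$ matrix, hence $TN$; applying Theorem~\ref{ThmC}(3) to $N'$ and $x$ yields an index $i$ with $x_i(N'x)_i \geq 0$, and the entrywise domination $x_i(Mx)_i \geq x_i(N'x)_i$ at that $i$ finishes the argument.

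The main obstacle should be this sign-bookkeeping: ensuring that for every triple $(\alpha,\beta,x)$ a single test matrix $I_{z,z'}(A,B)$ carries the required $\pm$-pattern on the $(\alpha,\beta)$-positions. The availability of $2^{m+n}$ test matrices is exactly what absorbs the dependence on $\alpha,\beta$ through the free choice of the sign vectors outside $\alpha,\beta$; a two-matrix reduction in the spirit of Theorem~\ref{tp_hull_k} appears to be unavailable in the non-strict $TN$ setting for arbitrary submatrices, because the "needed" parity pattern on rows $\alpha$ and columns $\beta$ cannot in general be read off from a single matrix such as $C^\pm(A,B)$.
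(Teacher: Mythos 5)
Your proposal is correct and takes essentially the same route as the paper: reduce via Theorem~\ref{ThmC}(3) to non-strict sign non-reversal at alternating vectors, bound $x_i(Mx)_i$ from below by the same quadratic form evaluated at the corresponding submatrix of a suitable test matrix $I_{z,z'}(A,B)$ (obtained by extending the alternating sign pattern on the chosen rows and columns to full sign vectors), and then invoke the $TN_k$ hypothesis on that test matrix. The only cosmetic difference is that you rederive the domination inequality directly through the entrywise $\min/\max$ choice, whereas the paper quotes it as Lemma~\ref{rohn_exten2} (Rohn--Rex).
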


As in Theorem~\ref{tp_hull_k}, note that this test set is independent of
$k$.


\begin{rem}
Note that the two matrices $C^\pm(A,B)$ in Theorem~\ref{tp_hull_k} do not
always suffice as test matrices for the interval hull $\mathbb{I}(A,B)$
to be $TN_k$ as in Theorem~\ref{Ttnk}. For instance, let $n \geq 4$ and
$m, k \geq 3$, and define
\[
A_{m \times n} = \begin{pmatrix} A' & {\bf 0} \\ {\bf 0} & {\bf 0}
\end{pmatrix}, \ B_{m \times n} = \begin{pmatrix} B' &
{\bf 0} \\ {\bf 0} & {\bf 0} \end{pmatrix}, \quad \text{where} \quad
A' := \begin{pmatrix} 3 & 1 & 0 & 1 \\ 2 & 2 & 0 & 2 \\ 1 & 1 & 0 & 1
\end{pmatrix}, \ B' := \begin{pmatrix} 4 & 2 & 0 & 2 \\ 3 & 2 & 0 & 2 \\
1 & 1 & 0 & 1
\end{pmatrix}.
\]
It is easily verified that $C^\pm(A,B)$ are both $TN$, and the matrix
$C_{m \times n} = \begin{pmatrix} C' & {\bf 0} \\ {\bf 0} & {\bf 0}
\end{pmatrix}$ lies in $\mathbb{I}(A,B)$, where
\[
C' := \begin{pmatrix}
3 & 2 & 0 & 1 \\ 3 & 2 & 0 & 2 \\ 1 & 1 & 0 & 1
\end{pmatrix},
\]
yet $C$ has a negative $3 \times 3$ minor.
\end{rem}

We conclude the discussion of our main results by returning to the
simplicity of the sign non-reversal condition in Theorems
\ref{tp-sign-rev_k} and \ref{ThmC} to characterize total
positivity/non-negativity. The equivalence of $TP_k$ or $TN_k$ to all
square submatrices satisfying a property is reminiscent of the
well-known, fundamental characterization of total positivity by
Gantmacher--Krein~\cite{gantmacher-krein} (or of $P$-matrices
in~\cite{gale-nikai-pmat}), which we now recall for the reader's
convenience:

\begin{theorem}\label{Tgk}
Given a rectangular matrix $A \in \mathbb{R}^{m \times n}$, the following
statements are equivalent.
\begin{enumerate}
	\item The matrix $A$ is totally positive of order $k$.
	\item Every square submatrix of $A$ of size $r \leq k$ has
	positive (and simple) eigenvalues.
\end{enumerate}
\end{theorem}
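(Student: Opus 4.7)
My plan is to handle the two implications separately, with (2)~$\Rightarrow$~(1) being immediate and (1)~$\Rightarrow$~(2) carrying the real content. For the easy direction, any square submatrix $B$ of size $r \leq k$ satisfies $\det(B) = \prod_{i=1}^r \lambda_i(B)$, so if the eigenvalues of $B$ are all positive then $\det(B) > 0$; applied to every $r \times r$ submatrix with $r \leq k$, this says every minor of $A$ of order at most $k$ is positive, i.e.\ $A$ is $TP_k$.

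The substantive direction is (1)~$\Rightarrow$~(2). Fix a square submatrix $B$ of $A$ of size $r \leq k$; since each minor of $B$ is a minor of $A$ of order $\leq k$, the matrix $B$ is itself a square totally positive matrix. I would then pass to compound matrices. For each $1 \leq j \leq r$, the $j$th compound $C_j(B)$ is a $\binom{r}{j} \times \binom{r}{j}$ matrix whose entries are precisely the $j \times j$ minors of $B$, hence entrywise strictly positive. The classical Perron--Frobenius theorem applied to $C_j(B)$ supplies a simple positive eigenvalue $\mu_j$ of $C_j(B)$ that strictly dominates every other eigenvalue of $C_j(B)$ in modulus. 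On the other hand, the Kronecker-type identity for compound spectra states that the eigenvalues of $C_j(B)$, counted with algebraic multiplicity, are the products
\[
\lambda_{i_1}(B) \cdots \lambda_{i_j}(B), \qquad 1 \leq i_1 < i_2 < \cdots < i_j \leq r,
\]
where $\lambda_1(B), \dots, \lambda_r(B) \in \mathbb{C}$ are the eigenvalues of $B$. Ordering these by decreasing modulus, simple strict dominance of $\mu_j$ for every $j = 1, \dots, r$ forces the strict chain $|\lambda_1(B)| > |\lambda_2(B)| > \cdots > |\lambda_r(B)|$. In particular each $\lambda_i(B)$ is algebraically simple (distinct moduli) and real (a non-real eigenvalue would pair with its conjugate at equal modulus). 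Positivity now follows inductively: $\lambda_1(B) = \mu_1 > 0$, and $\lambda_1(B) \cdots \lambda_j(B) = \mu_j > 0$ forces $\lambda_j(B) > 0$ for every $j$.

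The main obstacle, which is really the heart of the argument, is marshalling the two classical inputs about compound matrices: (a) that $B$ being totally positive implies $C_j(B)$ is entrywise positive, so Perron--Frobenius applies with a simple strictly dominant eigenvalue; and (b) that the spectrum of $C_j(B)$ is precisely the multiset of $j$-fold products of eigenvalues of $B$. Both are standard facts, but together they do all the nontrivial work; granting them, (1)~$\Rightarrow$~(2) becomes essentially mechanical, and the converse is a one-line observation about determinants.
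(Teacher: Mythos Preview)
The paper does not supply its own proof of this theorem; it is stated purely as a classical result of Gantmacher--Krein (cited to \cite{gantmacher-krein}) and invoked only for context, so there is no in-paper proof to compare against. Your argument is in fact the classical Gantmacher--Krein proof itself: apply Perron--Frobenius to each compound matrix $C_j(B)$, use the Kronecker identity on spectra of compounds to read off the eigenvalue products, and deduce the strict modulus chain and positivity inductively. Both directions are correct as written; the only thing one might add for completeness is a sentence making explicit the induction step identifying $\mu_j$ with $\lambda_1(B)\cdots\lambda_j(B)$ before concluding $|\lambda_j(B)|>|\lambda_{j+1}(B)|$, but this is a presentational rather than mathematical gap.
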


In a sense, assertions $(1) \Longleftrightarrow (2)$ in
Theorems~\ref{tp-sign-rev_k} and~\ref{ThmC} resemble this result in the
structure of the second assertions and the simplicity of the statements.
Similarly, recall the well known paper by Fomin and Zelevinsky
\cite{FZ00} about tests for totally positive matrices, as well as recent
follow-ups such as~\cite{Chepuri}. Our results may be regarded as being
similar in spirit.

\section{The proofs}

\subsection{The sign non-reversal characterization of total positivity}

We begin by proving Theorem~\ref{tp-sign-rev_k}; this requires two
preliminary results. The first, from 1965, establishes a sign
non-reversal phenomenon for matrices with positive principal minors
(these are known as $P$-matrices):

\begin{theorem}[Gale--Nikaido,~\cite{gale-nikai-pmat}]\label{snrp}
A matrix $A \in \mathbb{R}^{n \times n}$ has all principal minors
positive if and only if for all $0 \neq x \in \mathbb{R}^n$, there exists
$i \in \langle n \rangle$ such that $x_i (Ax)_i > 0$.
\end{theorem}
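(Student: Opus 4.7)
The plan is to prove the two implications of the theorem separately, with the forward direction ($P$-matrix $\Rightarrow$ sign non-reversal) being the substantive half.

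For the reverse direction, I would use a convex homotopy. Define $A_t := (1-t)I_n + tA$ for $t \in [0,1]$. If $i$ witnesses the sign non-reversal of $A$ at some $0 \neq x \in \mathbb{R}^n$, then
\[
x_i(A_t x)_i \;=\; (1-t)\, x_i^2 \;+\; t\, x_i (Ax)_i \;>\; 0
\]
for every $t \in [0,1]$, so $A_t$ inherits the sign non-reversal property. Padding test vectors with zeros in complementary coordinates, this property descends to every principal submatrix $(A_t)_S$. Each such $(A_t)_S$ must then be invertible (any kernel vector would violate sign non-reversal), so $t \mapsto \det((A_t)_S)$ is continuous and non-vanishing on $[0,1]$, hence of constant sign; its value at $t = 0$ is $1$, so $\det(A_S) > 0$, which is the $P$-matrix property.

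For the forward direction I would induct on $n$; the base case $n = 1$ is immediate. In the inductive step I first reduce the test vector $x$ to a canonical form. If some $x_i = 0$, restricting to the principal submatrix $A' := A_{\langle n \rangle \setminus \{i\}}$ (still a $P$-matrix) and using that $x_j(Ax)_j = x_j(A'x')_j$ for $j \neq i$, the inductive hypothesis closes the case. Otherwise, set $D := \diag(\mathrm{sign}(x_i))$: since $D^2 = I$, the matrix $B := DAD$ has principal minors $\det(B_S) = \det(D_S)^2 \det(A_S) = \det(A_S) > 0$, so it is again a $P$-matrix, and $y := Dx > 0$ satisfies $y_i(By)_i = x_i(Ax)_i$. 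The theorem therefore reduces to the sharpened statement: \emph{a $P$-matrix cannot send a strictly positive vector to a non-positive vector}.

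The main obstacle is proving precisely this assertion, and I would attack it by a secondary induction on $n$ using Schur complements. For any pivot $k$, the Schur complement $B/b_{kk}$ is an $(n-1)$-dimensional $P$-matrix, since each of its principal minors is a ratio $\det(B_{S \cup \{k\}})/b_{kk}$ of two positive quantities. Partitioning $y = (y';y_k)$ accordingly, a direct calculation gives
\[
(B/b_{kk})\, y' \;=\; (By)_{\neq k} \;-\; b_{\cdot k}\, (By)_k / b_{kk},
\]
where $b_{\cdot k}$ is column $k$ of $B$ with the pivot entry removed. If $b_{\cdot k} \leq 0$ entrywise, the hypothesis $By \leq 0$ together with $b_{kk} > 0$ forces $(B/b_{kk})\, y' \leq 0$ with $y' > 0$, and the induction hypothesis applied to $B/b_{kk}$ produces the desired contradiction. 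The genuinely delicate case is when \emph{every} off-pivot column of $B$ has some strictly positive entry, so no single choice of $k$ makes the induction go through directly; here one must select the pivot more carefully (in the style of the simplex-method ratio test, e.g.\ by maximizing $y_k / |(By)_k|$ to retain the forbidden-sign pattern after pivoting). This pivot-selection step is the crux of the classical Gale--Nikaido argument and the step I expect to be the bottleneck.
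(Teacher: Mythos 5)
The paper does not prove this statement at all: it is quoted as a classical result of Gale--Nikaido (1965) and used as a black box, so there is no internal proof to compare against; your proposal therefore has to stand on its own. Its reverse direction does: the homotopy $A_t=(1-t)I_n+tA$, the padding-by-zeros restriction to principal submatrices, and the non-vanishing of $t\mapsto\det((A_t)_S)$ on $[0,1]$ together give a complete and correct proof that sign non-reversal implies the $P$-property. The reductions in the forward direction (restriction to the support of $x$, conjugation by $D=\diag(\mathrm{sign}(x_i))$, and the Schur-complement identities $\det((B/b_{kk})_S)=\det(B_{S\cup\{k\}})/b_{kk}$ and $(B/b_{kk})y'=(By)_{\neq k}-b_{\cdot k}(By)_k/b_{kk}$) are also correct.

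The genuine gap is exactly where you say you expect it: the pivot-selection step, which is the entire substance of the forward implication, is not carried out. After reducing to $y>0$ with $By\leq 0$ (and one may further assume $By<0$, since a coordinate with $(By)_k=0$ lets your easy case go through), the induction closes only if one can choose $k$ so that $(By)_j-b_{jk}(By)_k/b_{kk}\leq 0$ for every $j\neq k$; writing $u:=-By>0$, this amounts to finding $k$ with $b_{jk}u_k\leq b_{kk}u_j$ for all $j$ with $b_{jk}>0$. Your handled case ($b_{\cdot k}\leq 0$ for some $k$) is the trivial instance of this, and the suggested ratio test ``maximize $y_k/|(By)_k|$'' does not obviously produce such a $k$: the required inequalities involve only $u$ and the entries of $B$, not $y$, and no argument is offered that the $P$-property forces a valid pivot to exist (this is precisely where positivity of the non-trivial principal minors must enter; invertibility and positivity of diagonal entries alone are not enough). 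As written, the crux of Gale--Nikaido is asserted rather than proved, so the proposal is not yet a proof of the theorem; either complete this pivot argument or replace the forward direction by one of the standard complete arguments (e.g.\ the Gale--Nikaido inductive lemma that $x\geq 0$, $Ax\leq 0$ forces $x=0$ for a $P$-matrix, or an LCP-uniqueness argument).
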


The next preliminary result is the well-known 1912 result of Fekete for
$TP$ matrices, subsequently extended in 1955 by Schoenberg to $TP_k$
matrices.

\begin{theorem}[Fekete~\cite{FP12}, Schoenberg~\cite{S55}]\label{fec}
Let $m,n \geq k \geq 1$ be integers. Then $A \in \mathbb{R}^{m \times n}$
is $TP_k$ if and only if all contiguous submatrices of $A$ of size at
most $k$ have positive determinant.
\end{theorem}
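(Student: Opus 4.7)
The plan is to handle the two implications separately. The forward direction -- that $TP_k$ implies positivity of all contiguous minors of size at most $k$ -- is immediate, since contiguous submatrices form a subfamily of all submatrices. The substance lies in the converse, which I would prove by a nested induction: an outer induction on the minor size $r \leq k$, with hypothesis that every minor of $A$ of size $\leq r-1$ has already been shown positive; and, at fixed $r$, an inner induction on the \emph{dispersion}
\[
d(I,J) \;:=\; (i_r - i_1) + (j_r - j_1) - 2(r-1)
\]
of the index sets $I = \{i_1 < \cdots < i_r\}$, $J = \{j_1 < \cdots < j_r\}$ of a minor $\det A[I,J]$. Note that $d(I,J) = 0$ iff the minor is contiguous, and the outer base case $r=1$ is trivial since $1 \times 1$ minors are automatically contiguous.

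For the inner step, fix $r \geq 2$ and suppose $d(I,J) > 0$; after a transposition (which preserves $TP_k$ and the hypotheses) we may assume $i_{s+1} - i_s \geq 2$ for some $s$. Pick $\alpha \in \{i_s+1,\dots,i_{s+1}-1\}$ filling this gap, pick a suitable $\beta$ adjacent to $J$, and apply the Desnanot--Jacobi (Sylvester) identity
\[
\det M \cdot \det M[\mathrm{core}] \;=\; \det M^{\mathrm{NW}} \cdot \det M^{\mathrm{SE}} \;-\; \det M^{\mathrm{NE}} \cdot \det M^{\mathrm{SW}}
\]
to the $(r+1) \times (r+1)$ submatrix $M = A[I \cup \{\alpha\}, J \cup \{\beta\}]$, in which the four corner $r \times r$ minors delete one ``end'' row and column of $M$ and $M[\mathrm{core}]$ deletes both end rows and both end columns. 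I would arrange the augmentation so that $\det A[I,J]$ appears as one of the four corners, the other three corners are $r \times r$ minors of $A$ with strictly smaller dispersion than $d(I,J)$, and $\det M[\mathrm{core}]$ is an $(r-1) \times (r-1)$ minor, positive by the outer induction. Solving the identity for $\det A[I,J]$ then expresses it as a ratio whose numerator and denominator are positive by the two inductive hypotheses.

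The main obstacle, and the point I expect to require the greatest care, is the appearance of $\det M$ itself -- an $(r+1) \times (r+1)$ minor, not covered by the inductive hypothesis at the top level $r = k$. The Schoenberg-style resolution is to choose the augmentation so that $M$ is itself reducible: inserting $\alpha,\beta$ adjacent to the endpoints of $I,J$ in such a way that $M$ is \emph{contiguous} of size $r+1$, and then either invoking the $r+1 \leq k$ contiguous hypothesis, or -- in the boundary case $r = k$ -- replacing the Sylvester identity with a size-preserving three-term Plücker relation that only involves $r \times r$ minors with strictly smaller dispersion. Verifying that the augmentation can always be arranged so that no minor exceeding size $k$ is ever invoked is the delicate bookkeeping step that closes the argument.
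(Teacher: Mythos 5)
The paper does not actually prove Theorem~\ref{fec}: it is quoted as a classical preliminary due to Fekete--P\'olya and Schoenberg (see \cite[pp.~78]{fallat-john} or \cite{pinkus}), so there is no in-paper argument to compare against; your sketch must therefore stand on its own. Its skeleton is the right classical one (forward direction trivial; converse by a double induction on the order $r$ and on the dispersion), but the inductive step as you describe it does not go through. If $\alpha$ fills an \emph{interior} gap of $I$, i.e.\ $i_s<\alpha<i_{s+1}$, then $\alpha$ is neither the first nor the last row of $M=A[I\cup\{\alpha\},\,J\cup\{\beta\}]$; the four corner minors in the Desnanot--Jacobi identity are obtained by deleting the first or last row and the first or last column of $M$, so every corner still contains row $\alpha$, and hence $\det A[I,J]$ does not occur in the identity at all --- there is nothing to solve for. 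If instead you append $\alpha$ (and $\beta$) beyond the ends of $I$ (and $J$) so that $\det A[I,J]$ does become a corner, then the other three corners retain the interior gap of $I$ and in general have the \emph{same} dispersion, so the inner induction hypothesis does not apply to them. In either arrangement the step fails as stated.

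Moreover, the difficulty you flag with $\det M$ is not confined to the boundary case $r=k$: since the induction ascends in $r$, at stage $r$ you never yet know positivity of any non-contiguous minor of order $r+1$ (the hypothesis gives only contiguous ones), and $M=A[I\cup\{\alpha\},J\cup\{\beta\}]$ is contiguous only when $I$ and $J$ are already essentially contiguous. So the identity must be chosen to involve only minors of order at most $r$, with the target entering with a coefficient of known sign and with every other order-$r$ minor having strictly smaller dispersion; arranging such an identity (typically by first treating the case where one index set is consecutive and inducting on the dispersion of the other) is exactly the substance of the Fekete--Schoenberg argument, and it is precisely what your proposal defers to ``delicate bookkeeping.'' You would also need to handle the extreme cases $r=k=m$ or $r=k=n$, where there is no room to augment by an extra row or column, and to check that the rectangular $TP_k$ (rather than full $TP$) setting never forces you to invoke a minor of order exceeding $k$. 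As it stands, the proposal is a plausible plan rather than a proof.
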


See also \cite[pp.~78]{fallat-john}. For more details about totally
positive matrices, we refer to \cite{K68,pinkus}.

\begin{proof}[Proof of Theorem \ref{tp-sign-rev_k}]
    That $(1) \implies (2)$ follows from Theorem \ref{snrp}, while $(2)
    \implies (3) \implies (4)$ is immediate. To show $(4) \implies (1)$,
    by Theorem~\ref{fec} it suffices to show all contiguous $r \times r$
    minors of $A$ are positive, for $r \leq k$. We prove this by
    induction on $r$, with the $r=1$ case immediate from (4)(i),(ii). Now
    suppose all contiguous minors of $A$ of size at most $(r-1)$ are
    positive, and $B$ is a contiguous $r \times r$ submatrix of $A$. Then
    all proper minors of $B$ are positive, by Theorem~\ref{fec}.
    
    We first claim that $B$ is invertible. Indeed, suppose $Bx = 0$, so
    that $x \not\in \altr{r}$. Moreover, all components of $x$ are
    nonzero, else a proper minor of $B$ vanishes. Now partition $x \neq
    0$ into `contiguous coordinates of like signs':
    \[
    (x_1,\ldots ,x_{s_1}), \quad (x_{s_1 +1},\ldots ,x_{s_2}), \quad
    \ldots \quad (x_{s_u+1},\ldots ,x_r),
    \]
    with all coordinates in the $i$th component having the same sign,
    which equals $(-1)^{i-1}$ without loss of generality. Set $s_0:=0$
    and $s_{u+1}:=r$, and note that $u \leq r-2$ since $x \not\in
    \altr{r}$. Let $c^1,\ldots, c^r \in \mathbb{R}^r$ denote the columns
    of $B$, and define
    \[
    y^j:= \sum\limits_{l=s_{j-1}+1}^{s_j}\vert x_l \vert c^l, \qquad j
    \in \langle u+1 \rangle.
    \]
    Let $Y:=[y^1,y^2,\ldots, y^{u+1}] \in \mathbb{R}^{r \times (u+1)}$.
    We claim $Y$ is totally positive. Since no $x_j$ is zero, and all
    proper minors of $B$ are positive, for an integer $p \leq u+1$ and
    $p$-element subsets $I \subset \langle r \rangle$, $J\subseteq
    \langle u+1 \rangle$ one computes using standard properties of
    determinants:
    \[
    \det Y_{I\times J}= \sum_{l_1=s_{j_1-1}+1}^{s_{j_1}} \cdots
    \sum_{l_p=s_{j_p-1}+1}^{s_{j_p}}  \vert x_{l_1} \vert \ldots \vert
    x_{l_p} \vert \det B_{I\times L} > 0,
    \]
    where $L=\{l_1,\ldots l_p\}$, and $Y_{I \times J}$ denotes the
    submatrix of $Y$ with rows and columns indexed by $I,J$ respectively;
    a similar notation applies to $B_{I \times L}$.
    Hence $Y$ is totally positive.  But $Y \bd{u+1} = Bx = 0$, a
    contradiction. Thus $B$ is invertible, as claimed.

    Finally, we claim $\det B > 0 $. Define $z^B$ as in~\eqref{Ezb}; then
    \begin{equation}\label{tpalt}
    z^B_i= (-1)^{i-1} (\det B) \sum_{j=1}^r \det B_{(\langle r
    \rangle\setminus\{j\}) \times (\langle r \rangle \setminus\{i\})},
    \qquad i \in \langle r \rangle
    \end{equation}
    and the summation on the right is positive by assumption. Thus $z^B
    \in \altr{n}$, so by~(4)(ii) and~\eqref{tpalt}, there exists $i \in
    \langle r \rangle$ such that
    \begin{equation}\label{Ecramer2}
    0 \leq z^B_i (Bz^B)_i = z^B_i (\det B)^2 (-1)^{i-1} = (\det B)^3
    \sum_{j=1}^r \det B_{(\langle r \rangle\setminus\{j\}) \times
    (\langle r \rangle \setminus\{i\})}.
    \end{equation}
    From this it follows that $\det B > 0$, and the induction step is
    complete.
\end{proof}

\subsection{A test for total positivity (of any order) of the interval
hull of rectangular matrices}

We next demonstrate the aforementioned test for total positivity of the
interval hull $\mathbb{I}(A,B)$, as in Theorem~\ref{tp_hull_k}. We first
mention two preliminary results that are used in the proof. These require
the following notation.

\begin{defn}
Fix integers $m,n \geq 1$ and matrices $A, B \in \mathbb{R}^{m \times
n}$, with interval hull $\mathbb{I}(A,B)$.
\begin{enumerate}
	\item Define the $m \times n$ matrices $I_u, I_l$ via:
	$(I_u)_{ij}:= \max\{a_{ij},b_{ij}\}, (I_l)_{ij}:=
	\min\{a_{ij},b_{ij}\}$.
	\item For matrices (or vectors) $A,B$, write $A \leq B$ if all
	entries of $B-A$ are non-negative.
\end{enumerate}
\end{defn}

The first preliminary result is a straightforward verification.

\begin{lemma}\label{int-lem1}
    Let $m, n \geq 1$ and $A,B \in \mathbb{R}^{m \times n}$. Then $I_u,
    I_l, C^\pm(A,B) \in \mathbb{I}(A,B)$. If $m=n$, then $I_{z,z}(A,B)
    \in \mathbb{I}(A,B)$ for all $z \in \{ \pm 1 \}^n$.
\end{lemma}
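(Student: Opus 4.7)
The plan is to verify the conclusion entry by entry. From the defining equation~\eqref{hulleqn}, a matrix $C \in \mathbb{R}^{m \times n}$ belongs to $\mathbb{I}(A,B)$ if and only if each entry $c_{ij}$ lies in the closed interval with endpoints $a_{ij}$ and $b_{ij}$, i.e.\ in $[\min(a_{ij},b_{ij}), \max(a_{ij},b_{ij})]$. So the lemma reduces to checking that the listed matrices all satisfy this coordinate-wise containment.

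For $I_u$ and $I_l$, this is immediate from the definition: $(I_u)_{ij} = \max(a_{ij},b_{ij})$ and $(I_l)_{ij} = \min(a_{ij},b_{ij})$ are the endpoints themselves. For the matrices of the form $I_{z,z'}(A,B)$, I would compute the $(i,j)$-entry directly from the definition:
\[
[I_{z,z'}(A,B)]_{ij} \;=\; \frac{a_{ij}+b_{ij}}{2} \;-\; z_i z'_j \cdot \frac{|a_{ij}-b_{ij}|}{2}.
\]
Since $z_i z'_j \in \{\pm 1\}$, the elementary identities
\[
\max(a,b) = \frac{a+b+|a-b|}{2}, \qquad \min(a,b) = \frac{a+b-|a-b|}{2}
\]
show that this entry equals $\max(a_{ij},b_{ij})$ when $z_i z'_j = -1$ and equals $\min(a_{ij},b_{ij})$ when $z_i z'_j = 1$. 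Either way it lies in the required coordinate interval, so $I_{z,z'}(A,B) \in \mathbb{I}(A,B)$.

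Specializing, I would conclude: taking $z = \bd{m}$ and $z' = \pm\bd{n}$ yields $C^\pm(A,B) \in \mathbb{I}(A,B)$; and in the square case $m=n$, taking $z = z' \in \{\pm 1\}^n$ yields $I_{z,z}(A,B) \in \mathbb{I}(A,B)$. There is no real obstacle here—the lemma is a direct unpacking of definitions combined with the standard max/min identities, which is exactly why the authors call it a straightforward verification.
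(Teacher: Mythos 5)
Your proof is correct and is exactly the ``straightforward verification'' the paper intends: an entrywise check using $\max(a,b)=\tfrac{a+b+|a-b|}{2}$ and $\min(a,b)=\tfrac{a+b-|a-b|}{2}$, noting that the $(i,j)$ entry of $I_{z,z'}(A,B)$ is $\tfrac{a_{ij}+b_{ij}}{2}-z_iz'_j\tfrac{|a_{ij}-b_{ij}|}{2}\in\{\min(a_{ij},b_{ij}),\max(a_{ij},b_{ij})\}$. In fact you prove the slightly stronger statement that $I_{z,z'}(A,B)\in\mathbb{I}(A,B)$ for all sign vectors $z,z'$, which subsumes every case listed in the lemma.
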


The next lemma is precisely \cite[Theorem 2.1]{RJRG}; as the proof is
short, we include it.

\begin{lemma}\label{rohn_exten2}
    Fix $n \geq 1$ and $A,B \in \mathbb{R}^{n \times n}$ and $x \in
    \mathbb{R}^n$. Let the tuple $z \in \{ \pm 1 \}^n$ be such that $z_i
    = 1$ if $x_i \geq 0$ and $z_i = -1$ if $x_i < 0$. If $C \in
    \mathbb{I}(A,B)$, then
    \[
    x_i(Cx)_i\geq x_i (I_{z,z}(A,B) x)_i, \quad \forall i \in \langle n
    \rangle.
    \]
\end{lemma}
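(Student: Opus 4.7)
The plan is a direct entrywise computation that exploits the explicit formula for $I_{z,z}(A,B)$ together with a sign-matching identity forced by the definition of $z$. The identity is: $z_i x_i = |x_i|$ for every $i$ (this holds because $z_i$ equals $+1$ when $x_i \geq 0$ and $-1$ when $x_i < 0$). Squaring gives the unconditional identity $x_i x_j z_i z_j = |x_i|\,|x_j|$, independent of the signs of $x_i$ and $x_j$. This will be the one combinatorial observation that drives everything.

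With this identity in hand, the first step is to expand the difference $x_i (Cx)_i - x_i (I_{z,z}(A,B)\,x)_i$ coordinatewise, using $(I_{z,z}(A,B))_{ij} = \frac{a_{ij}+b_{ij}}{2} - z_i z_j \frac{|a_{ij}-b_{ij}|}{2}$. The $z_i z_j$ terms collapse to $|x_i|\,|x_j|$ via the identity above, yielding
\[
x_i (Cx)_i - x_i (I_{z,z}(A,B)\, x)_i = \sum_{j=1}^n \left[ x_i x_j \Bigl( c_{ij} - \tfrac{a_{ij}+b_{ij}}{2} \Bigr) + |x_i|\,|x_j| \cdot \tfrac{|a_{ij}-b_{ij}|}{2} \right].
\]

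The second step is to show each summand is nonnegative. Since $C \in \mathbb{I}(A,B)$, the entry $c_{ij}$ lies in the closed interval with endpoints $a_{ij}$ and $b_{ij}$, so $\bigl| c_{ij} - \frac{a_{ij}+b_{ij}}{2} \bigr| \leq \frac{|a_{ij}-b_{ij}|}{2}$. Multiplying by $|x_i|\,|x_j|$ and using $|x_i x_j| = |x_i|\,|x_j|$ gives the termwise bound $x_i x_j \bigl( c_{ij} - \tfrac{a_{ij}+b_{ij}}{2} \bigr) \geq - |x_i|\,|x_j| \cdot \frac{|a_{ij}-b_{ij}|}{2}$, which makes each summand above nonnegative. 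Summing over $j$ yields the desired inequality.

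I do not anticipate a real obstacle: the argument is essentially a short rewriting exercise once the sign-matching identity $z_i x_i = |x_i|$ is noted. The only mild subtlety is the careful bookkeeping of signs of $x_i x_j$ versus $|x_i|\,|x_j|$, which is precisely why the hypothesis chooses $z$ to match the sign pattern of $x$ componentwise.
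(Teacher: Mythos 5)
Your proof is correct and is essentially the paper's argument written out coordinatewise: the paper bounds $|x_i((C-\tfrac{A+B}{2})x)_i| \leq |x_i|(\tfrac{|A-B|}{2}|x|)_i$ via the triangle inequality and then uses $|x| = D_z x$, which is exactly your termwise bound $|c_{ij}-\tfrac{a_{ij}+b_{ij}}{2}| \leq \tfrac{|a_{ij}-b_{ij}|}{2}$ combined with the identity $z_i x_i = |x_i|$. No gap; the two write-ups differ only in matrix versus entrywise notation.
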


\begin{proof}
    In this proof we write $I_c := (A+B)/2$ and $\Delta := |A-B|/2$ for
    ease of exposition. Given $C \in \mathbb{I}(A,B) = \mathbb{I}(I_l,
    I_u)$,
    \[
    I_c - \Delta = I_l \leq C \leq I_u = I_c + \Delta.
    \]
    From this -- and given $0 \neq x \in \mathbb{R}^n$ and
    $i \in \langle n \rangle$ -- it follows via the triangle inequality
    that
    \[
    |x_i((C-I_c)x)_i | \leq | x_i | (| C-I_c| \cdot | x | )_i
    \leq |x_i| ( \Delta  |x|)_i.
    \]
    From this, we compute using that $|x| = D_z x$:
    \[
    x_i (Cx)_i \geq x_i (I_c x)_i - z_i x_i (\Delta D_z x)_i = x_i (I_c
    x)_i - x_i (D_z \Delta D_z x)_i = x_i (I_{z,z}(A,B) x)_i. \qedhere
    \]
\end{proof}

With these preliminaries at hand, we have:

\begin{proof}[Proof of Theorem \ref{tp_hull_k}]
    If $\mathbb{I}(A, B)$ is $TP_k$, then so are $C^\pm(A,B)$ by Lemma
    \ref{int-lem1}.
    Conversely, suppose $C^\pm(A,B)$ are $TP_k$, and let $C \in
    \mathbb{I}(A,B)$. Fix $r \in \langle k \rangle$ and a vector $x \in
    \altr{r}$. Now let $C'$ be a $r \times r$ contiguous submatrix of
    $C$, say $C' = C_{J \times K}$ for contiguous sets of indices $J, K
    \subseteq \langle k \rangle$ with $|J| = |K| = r$. It suffices to
    show by Theorem~\ref{tp-sign-rev_k}(3) that $x_i (C'x)_i > 0$ for
    some $i \in \langle{r} \rangle$.

    To proceed, we require some notation. Let $A', B'$ be contiguous
    submatrices of $A,B$ respectively, consisting of the entries in the
    same positions as $C'$. Now since $C' \in \mathbb{I}(A',B')$,
    Lemma~\ref{rohn_exten2} implies for some $i \in \langle r \rangle$
    that
    \begin{equation}\label{Erohn}
    x_i (C' x)_i \geq x_i (I_{\z{r}, \z{r}}(A',B')x)_i,
    \end{equation}
    where the vector $\z{r} \in \{ \pm 1 \}^r \cap \altr{r}$ is given by
    $\z{r}_j := x_j / |x_j|\ \forall j$. Since $C' = C_{J \times K}$, we
    can embed the vector of signs $\z{r}$ in contiguous positions $J
    \subseteq \langle m \rangle$ and $K \subseteq \langle n \rangle$, and
    uniquely extend to alternating $\pm 1$-valued vectors of lengths
    $m,n$ respectively. Formally, there exist unique signs
    $\varepsilon_J, \varepsilon_K \in \{ \pm 1 \}$, such that $\z{r}$ is
    the restriction to positions $J$ (respectively $K$) of $\varepsilon_J
    \bd{m} \in \altr{m}$ (respectively $\varepsilon_K \bd{n} \in
    \altr{n}$). But then $I_{\z{r},\z{r}}(A',B')$ is a contiguous $r
    \times r$ submatrix of
    \[
    I_{\varepsilon_J \bd{m},\; \varepsilon_K \bd{n}}(A,B) = \frac{A+B}{2}
    - \varepsilon_J \varepsilon_K D_{\bd{m}} \frac{|A-B|}{2} D_{\bd{n}}
    \in \{ C^+(A,B), C^-(A,B) \}.
    \]
    By assumption, this matrix is $TP_k$, so $I_{\z{r},\z{r}}(A',B')$ is
    $TP$. Using Theorem~\ref{tp-sign-rev_k}(3) and~\eqref{Erohn}, we have
    \[
    x_i (C' x)_i \geq x_i (I_{\z{r},\z{r}}(A',B')x)_i > 0.
    \]
    Thus $C'$ has the sign non-reversal property with respect to all $x
    \in \altr{r}$. By Theorem~\ref{tp-sign-rev_k}, $C$ is $TP_k$.
\end{proof}

\begin{rem}
We remark that Garloff's results for $n \times n$ matrices in
\cite{Gar82} (see also \cite[Chapter~3]{pinkus}) are stated with respect
to the checkerboard ordering $\leq^*$, in which
\[
A \leq ^* B \quad \Longleftrightarrow \quad D_{\bd{n}} (B - A) D_{\bd{n}}
\geq 0_{n \times n}.
\]
Now an easy computation shows that the hull $\mathbb{I}(A,B)$ itself does
not depend on whether one uses the entrywise ordering or the checkerboard
ordering. More precisely,
\[
\mathbb{I}(A,B) = \{ C : I_l \leq C \leq I_u \} = \{ C : C^-(A,B) \leq^*
C \leq^* C^+(A,B) \}.
\]
In particular, the test set of $\{ C^+(A,B), C^-(A,B) \}$ works
regardless of which ordering is used -- and works for all rectangular
matrices and for testing the $TP_k$ property for any $k \geq 1$.
\end{rem}

\begin{rem}\label{Rgarloff}
Garloff has pointed out to us that Theorem~\ref{tp_hull_k} can also be
proved by reducing to the case of square $TP$ matrices (which is his
result in~\cite{Gar82}) as follows: if $C^\pm(A,B) \in \mathbb{R}^{m
\times n}$ are both $TP_k$, then every contiguous square submatrix of
$C^\pm(A,B)$ of size $r \in \langle k \rangle$ is $TP$. Now given $C \in
\mathbb{I}(A,B)$, every contiguous square submatrix of $C$ of size $r \in
\langle k \rangle$ lies in between square $TP$ submatrices of $C^+(A,B)$
and $C^-(A,B)$, hence is $TP$ by~\cite{Gar82}. Now use the
Fekete--Schoenberg Theorem~\ref{fec}.
\end{rem}

\subsection{The sign non-reversal characterization and interval test of
total non-negativity}\label{STNk}

We conclude by proving our results involving the $TN_k$ property.
Theorem~\ref{ThmC}, which characterizes the $TN_k$ property, requires a
classical density result by Whitney:

\begin{theorem}[Whitney,~\cite{Whitney}]\label{Twhitney}
Given integers $m,n \geq k \geq 1$, the set of $m \times n$ $TP_k$
matrices is dense in the set of $m \times n$ $TN_k$ matrices.
\end{theorem}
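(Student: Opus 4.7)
The plan is to approximate a $TN_k$ matrix $A \in \mathbb{R}^{m \times n}$ by a sequence of $TP_k$ matrices via two-sided smoothing with a $TP$ kernel, together with a small additive $TP$ perturbation to handle the case where $A$ has vanishing minors. The starting tool is the classical fact that, for each integer $N \geq 1$ and parameter $c > 0$, the Gaussian Toeplitz matrix $G_N(c) := (e^{-c(i-j)^2})_{i,j=1}^N$ is totally positive of all orders (because $(x,y) \mapsto e^{-c(x-y)^2}$ is a totally positive kernel), and $G_N(c) \to I_N$ entrywise as $c \to \infty$.

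Given a $TN_k$ matrix $A$, first set $A_c := G_m(c)\, A\, G_n(c)$. By the Cauchy--Binet formula, each $r \times r$ minor of $A_c$ (for $r \leq k$) is a non-negative combination
\[
\det(A_c)_{I,J} \;=\; \sum_{|P|=|Q|=r} \det G_m(c)_{I,P}\, \det A_{P,Q}\, \det G_n(c)_{Q,J}
\]
of minors of $A$ with strictly positive coefficients, so $A_c$ is $TN_k$ and $A_c \to A$ as $c \to \infty$. Moreover $A_c$ is already $TP_k$ whenever $A$ admits, for each $r \in \langle k \rangle$, at least one nonzero $r \times r$ minor (because then at least one summand on the right is strictly positive). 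To handle the case where $A$ has vanishing minors---most starkly, $A = 0$---fix a reference $TP$ matrix $W \in \mathbb{R}^{m \times n}$ (say $W_{ij} := e^{ij}$, which is $TP$ as an exponential Vandermonde-type kernel), and set
\[
A_{c,\delta} \;:=\; A_c + \delta W.
\]
By Theorem~\ref{fec}, one only needs to verify positivity of the contiguous minors $\det(A_{c,\delta})_{I,J}$ of size $r \leq k$. A diagonal argument then produces a sequence $(c_\ell, \delta_\ell)$ with $c_\ell \to \infty$ and $\delta_\ell \to 0$ at appropriate relative rates so that the $TP_k$ matrices $A_{c_\ell, \delta_\ell}$ converge to $A$.

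The main obstacle is establishing that $A_{c,\delta}$ is genuinely $TP_k$ at the chosen parameters. The polynomial $\delta \mapsto \det(A_c + \delta W)_{I,J}$ has constant term $\det(A_c)_{I,J} \geq 0$ and leading coefficient $\det W_{I,J} > 0$, but its intermediate coefficients are ``mixed determinants'' built from rows of $A_c$ and of $W$, which can be negative a priori (as a small sanity check confirms for generic pairs of $TP$ matrices). Resolving this delicacy requires either (i)~a refined polynomial perturbation $A + \sum_{s=1}^{k} \delta^{s} W_s$ with carefully chosen $TP$ matrices $W_1, \ldots, W_k$ so that each $\delta^s$-coefficient in the resulting expansion is non-negative at every relevant minor, or (ii)~replacing the ad hoc perturbation by the Loewner--Whitney factorization of a nonsingular $TN$ matrix as a product of elementary bidiagonal $TN$ factors, then perturbing each factor into an elementary bidiagonal $TP$ matrix with strictly positive off-diagonal entry; Cauchy--Binet guarantees that the resulting product is $TP_k$. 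The reduction from the nonsingular square case to the general rectangular $TN_k$ case would be carried out by embedding $A$ into a larger square matrix (padding with rows and columns of a $TP$ auxiliary), perturbing, and truncating back.
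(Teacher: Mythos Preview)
The paper does not prove Theorem~\ref{Twhitney}; it is quoted from Whitney~\cite{Whitney} as a classical result and used as a black box in the proof of Theorem~\ref{ThmC}. So there is no ``paper's own proof'' to compare against. That said, your Gaussian two-sided smoothing $A_c = G_m(c)\,A\,G_n(c)$ together with Cauchy--Binet is exactly Whitney's original device, and your observation that $A_c$ is already $TP_k$ whenever $A$ has at least one nonzero $r\times r$ minor for every $r\le k$ is correct and is the heart of the matter.

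The proposal is, however, not a proof: you explicitly leave the low-rank case unresolved. You correctly diagnose that the additive perturbation $A_c+\delta W$ fails in general because the mixed coefficients of $\delta\mapsto\det(A_c+\delta W)_{I,J}$ can be negative, but then you only \emph{list} two possible repairs without executing either. Route~(i), a graded perturbation $A+\sum_s \delta^s W_s$, would require constructing the $W_s$ so that every mixed minor is non-negative; you give no indication of how to do this, and it is not clear such $W_s$ exist. Route~(ii) invokes the Loewner--Whitney bidiagonal factorization, which is stated for \emph{nonsingular square} TN matrices; your reduction to that case by ``padding with rows and columns of a TP auxiliary'' is the delicate step, since an arbitrary TP border appended to a $TN_k$ matrix does not in general yield a $TN_k$ (let alone nonsingular TN) matrix. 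So as written, neither route closes the gap.

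A cleaner way to finish, which you are close to: if $A$ is $TN_k$ and has rank $\ge k$, then for every $r\le k$ it has a nonzero $r\times r$ minor, and your smoothing already gives $TP_k$. If instead $\operatorname{rank}(A)<k$, then \emph{all} minors of order exceeding the rank vanish, so $A$ is in fact TN of all orders, and you are reduced to Whitney's original $TN$/$TP$ statement for $k=\min(m,n)$. For that case one still needs to raise the rank while preserving total non-negativity; this is done not by an additive TP perturbation but, for instance, by noting that $A+\epsilon\, e_1 e_1^T$ and $A+\epsilon\, e_m e_n^T$ remain TN (the only affected minors pick up a non-negative cofactor), and iterating such corner perturbations to reach full rank before smoothing. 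Incorporating an argument of this type would turn your sketch into a complete proof.
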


This helps to show the promised characterization:

\begin{proof}[Proof of Theorem~\ref{ThmC}]
We prove a cyclic chain of implications.
First suppose $A \in \mathbb{R}^{m \times n}$ is $TN_k$. By
Theorem~\ref{Twhitney}, there exists a sequence $A^{(l)} \to A$ of $TP_k$
matrices. Fix $r \in \langle k \rangle$ and an $r \times r$ submatrix $B$
of $A$, and let $B^{(l)}$ be the submatrix of $A^{(l)}$ in the same
positions as $B$. Also fix a vector $0 \neq x \in \mathbb{R}^r$, and let
$J \subseteq \langle r \rangle$ index the nonzero components of $x$. Now
$B^{(l)}$ is $TP$, so by Theorem~\ref{tp-sign-rev_k}(2) there exists $j_l
\in J$ such that $x_{j_l} (B^{(l)} x)_{j_l} > 0$. Hence there exists an
(increasing) subsequence $l_p, p \geq 1$ of positive integers such that
all $j_{l_p}$ equal the same entry in $J$, say $j_0$. But then,
\[
x_{j_0} (Bx)_{j_0} = \lim_{p \to \infty} x_{j_{l_p}} (B^{(l_p)}
x)_{j_{l_p}} \geq 0, \qquad x_{j_0} \neq 0.
\]

Thus $(1) \implies (2)$. Clearly $(2) \implies (3) \implies (4)$. Now
assume~(4); we show that $\det B \geq 0$ for all $r \times r$ submatrices
$B$ of $A$, by induction on $r \in \langle k \rangle$.
Indeed, the base case $r=1$ is immediate. For the induction step, if
$\det B = 0$ then we are done; else $\det(B) \neq 0$. Now we once again
have~\eqref{tpalt}, and the sum on the right is non-negative. But since
$B$ is invertible, no column of $\adj(B)$ is zero, so the sum on the
right is in fact positive by the induction hypothesis. This implies $z^B
\in \altr{r}$, so we may repeat the calculation in~\eqref{Ecramer2} and
deduce that $\det B > 0$. This proves~(1) by induction.
\end{proof}

We conclude by showing our final remaining result:

\begin{proof}[Proof of Theorem~\ref{Ttnk}]
First verify that $I_{z,z'}(A,B) \in \mathbb{I}(A,B)$ for all $z,z'$.
Now repeat the proof of Theorem~\ref{tp_hull_k}, but working with
arbitrary $r \times r$ submatrices $C'$ of $C \in \mathbb{I}(A,B)$, where
$r \in \langle k \rangle$. Once again obtaining~\eqref{Erohn}, we note
that $I_{\z{r},\z{r}}(A',B')$ is a submatrix of $I_{z,z'}(A,B)$ for some
$z,z'$, and the latter is $TN_k$ by assumption. The remainder of the
proof is unchanged, except for the use of Theorem~\ref{ThmC} instead of
Theorem~\ref{tp-sign-rev_k}.
\end{proof}

\section*{Acknowledgements}
We thank J\"{u}rgen Garloff for directing us to several prior results in
the literature on interval hulls of TP matrices, for observing his
alternate proof in Remark~\ref{Rgarloff}, and for several helpful
remarks on an earlier version of this work that helped improve the
exposition. We also thank the referee for providing useful comments and
references that improved the manuscript.

P.N.~Choudhury was supported by National Post-Doctoral Fellowship
(PDF/2019/000275) from SERB (Govt.~of India) and the NBHM Post-Doctoral
Fellowship (0204/11/2018/R$\&$D-II/6437) from DAE (Govt.~of India).
M.R.~Kannan would like to thank  the SERB, Department of Science and
Technology, India, for financial support through the projects MATRICS
(MTR/2018/000986) and Early Career Research Award (ECR/2017/000643).
A.~Khare was partially supported by
Ramanujan Fellowship grant SB/S2/RJN-121/2017,
MATRICS grant MTR/2017/000295, and
SwarnaJayanti Fellowship grants SB/SJF/2019-20/14 and DST/SJF/MS/2019/3
from SERB and DST (Govt.~of India),
and by grant F.510/25/CAS-II/2018(SAP-I) from UGC (Govt.~of India).

\end{document}